\DeclareMathOperator{\Norm}{Norm}
\newcommand{\cO}{\mathcal{O}}
\newcommand{\Z}{\mathbb{Z}}
\newcommand{\Q}{\mathbb{Q}}
\begin{document}

\newtheorem{theorem}{Theorem}
\newtheorem{lemma}{Lemma}[section]
\newtheorem{proposition}[lemma]{Proposition}
\newtheorem{algorithm}[lemma]{Algorithm}
\newtheorem{corollary}[lemma]{Corollary}
\newtheorem{conjecture}{Conjecture}
\newtheorem{example}[lemma]{Example}

\theoremstyle{definition}
\newtheorem{definition}[theorem]{Definition}

\theoremstyle{remark}
\newtheorem{remark}[theorem]{Remark}

\newtheorem{acknowledgment}{Acknowledgement}

\title[]{On generalised Pythagorean triples over number fields}

\author{Pedro-Jos\'{e} Cazorla Garc\'{i}a}

\address{Departamento de Matem\'atica Aplicada, ICAI, Universidad Pontificia Comillas, Madrid, 28015, Spain}
\email{pjcazorla@comillas.edu}

\date{\today}

\begin{abstract}
    Generalised Pythagorean triples are integer tuples $(x,y,z)$ satisfying the equation $E_{a,b,c}: ax^2+by^2+cz^2=0$. A significant amount of research has been devoted towards understanding generalised Pythagorean triples and, in particular, we can now determine whether $E_{a,b,c}$ has solutions and find them in a computationally effective manner. 
    
    In this paper, we consider an extension of generalised Pythagorean triples to number fields $K$. In particular, we survey and extend the existing results over $\Q$ for determining if $E_{a,b,c}$ has solutions over number fields and if so, to find and parameterise them, as well as to find a minimal solution. Throughout the text, we incorporate numerous examples to make our results accessible to all researchers interested in the topic of generalised Pythagorean triples.
\end{abstract}

\keywords{Pythagorean triples, Diophantine equations, number fields, LLL algorithm, computational number theory}
\subjclass[2010]{Primary 11D09, Secondary 11R04, 11Y50, 14G12 }

\maketitle

\section{Introduction}
\subsection{Background}
\label{Sec:background}
A Pythagorean triple is a tuple of three positive integers $(x,y,z)$ satisfying
\begin{equation}
    \label{eqn:pythagorean}
x^2 + y^2 = z^2.
\end{equation}
The oldest known record of Pythagorean triples comes from Plimpton 322, a Babylonian tablet dating back to c.~1800 BC (see \cite{history} for a complete discussion of Plimpton 322). In this tablet, there is a list of $15$ Pythagorean triples with $x \le 12709$. Note that there are several mistakes in Plimpton 322, including an incorrect solution with $x = 25921$. While this is the oldest instance of Pythagorean triples, they were studied by many people in Antiquity, including Pythagoras himself.

However, a complete solution of \eqref{eqn:pythagorean} was not attained until Euclid (c.~300 BC) \cite[Book X, Proposition XXIX]{Euclid} showed that all solutions $(x,y,z) \in \Z^3$ to \eqref{eqn:pythagorean} with $\gcd(x,y,z) = 1$  could be parameterised by the following formulae:
\begin{equation}
    \label{eqn:parameterisation}
    \begin{cases}
    x = m^2-n^2, \\
    y = 2mn, \\
    z = m^2+n^2,
\end{cases}
\end{equation}
where $m, n \in \Z$ with $m > n > 0$. It is clear that, if $(\alpha, \beta, \gamma) \in \Z^3$ is a solution to \eqref{eqn:pythagorean}, so is $(d\alpha, d\beta, d\gamma) \in \Z^3$ for any $d \in \Z$. Consequently, it is enough to consider solutions $(x,y,z) \in \Z^3$ with $\gcd(x,y,z) = 1$, which we shall call \emph{primitive solutions}.

Euclid's work showed that \eqref{eqn:pythagorean} had infinitely many primitive solutions and that all of them could be found via an explicit parametric formula. Therefore, the Pythagorean equation was essentially completely solved in ancient times.

However, it was not until much later that generalisations of \eqref{eqn:pythagorean} were considered. Indeed, at the end of the 18th century, Legendre studied the \textbf{generalised Pythagorean equation}:
\begin{equation}
    \label{eqn:generalisedPythagorean}
    ax^2 + by^2 + cz^2 = 0, 
\end{equation}
where $a$, $b$ and $c$ are fixed squarefree integers with $\gcd(a, b, c) = 1$, and we are interested in primitive solutions $(x, y, z) \in \Z^3$ where $x$, $y$ and $z$ are not all zero. The amount of literature on the generalised Pythagorean equation is vast, and we now have a good understanding of solutions to \eqref{eqn:generalisedPythagorean}.

Firstly, in 1785, Legendre proved that \eqref{eqn:generalisedPythagorean} could have either infinitely many primitive solutions or no solutions, depending on whether certain conditions on $a$, $b$ and $c$ were satisfied.

In addition, he introduced an algorithmic procedure to compute a particular solution to \eqref{eqn:generalisedPythagorean} if there was one. This algorithm essentially proceeds by descent, replacing \eqref{eqn:generalisedPythagorean} by an equation with smaller values of $a$, $b$ and $c$ until a solution can be found quickly by inspection. We refer the reader to \cite[IV.3.3]{Smart} for an in-depth description of Legendre's descent method.

We remark that we now have significantly more efficient algorithms to compute a solution to \eqref{eqn:generalisedPythagorean}. For example, we highlight the work of Cremona and Rusin \cite{Cremona}, which uses the Lenstra-Lenstra-Lovász (LLL) lattice reduction algorithm \cite{LLL} and avoids the factorisation of the integers $a$, $b$ and $c$, as well as the paper by Simon \cite{Simon}, which considers matrices of ternary quadratic forms and makes use of the LLL algorithm to reduce them. This last method is implemented in the computer algebra system \texttt{Magma} \cite{Magma} to solve \eqref{eqn:generalisedPythagorean} over $\Q$.

Even prior to the development of these algorithms, Mordell \cite[Chapter 7, Theorems 4 and 5]{Mordellbook} found that, if a solution to \eqref{eqn:generalisedPythagorean} exists, then there is an explicit parameterisation similar to \eqref{eqn:parameterisation} for all integer solutions, which requires a particular solution $(\alpha_0, \beta_0, \gamma_0) \in \Z^3$. Therefore, the problem of finding all solutions to \eqref{eqn:generalisedPythagorean} is as difficult as that of finding one particular solution.

Inside the set of all solutions, there are some which are ``minimal'' in some sense. Indeed, before the aforementioned work by Mordell,  Hölzer \cite{Holzer} had shown that, if \eqref{eqn:generalisedPythagorean} has solutions, there exists a solution $(x, y, z) \in \Z^3$ satisfying
\begin{equation}
    \label{eqn:Holzercondition}
|x| \le \sqrt{|bc|}, \quad |y| \le \sqrt{|ac|} \quad \text{ and }  |z| \le \sqrt{|ab|}.
\end{equation}
Such a solution is called \textbf{Hölzer-reduced}. A different proof of Hölzer's theorem was later given by Mordell \cite{Mordell}, which was subsequently turned into an algorithm for computing a Hölzer-reduced solution by Cremona and Rusin \cite{Cremona}.

To summarise, the generalised Pythagorean equation \eqref{eqn:generalisedPythagorean} has been vastly studied throughout history, and, in particular, we know the following:
\begin{itemize}
    \item How to determine whether it has solutions or not.
    \item An efficient algorithm to compute a solution $(\alpha_0,\beta_0,\gamma_0) \in \Z^3$, if there is one.
    \item A set of parametric formulae to generate all solutions from a particular solution $(\alpha_0,\beta_0,\gamma_0) \in \Z^3$.
    \item An algorithm to compute Hölzer-reduced solutions from a particular solution $(\alpha_0,\beta_0,\gamma_0)  \in \Z^3$.
\end{itemize}

\subsection{Summary of contents}
In this paper, we consider the \textbf{generalised Py\-tha\-go\-rean equation over number fields}. For this, we let $K$ be a fixed number field with ring of integers $\cO_K$, and we consider solutions $(x,y, z) \in K^3$ to the equation
\begin{equation}
    \label{eqn:main}
    ax^2 + by^2 + cz^2 = 0, \quad a, b, c \in \cO_K \setminus\{0\},
\end{equation}
where the ideals $a\cO_K$, $b\cO_K$ and $c\cO_K$ have no squares of any principal ideals in their prime factorisations (this is the analogue of being squarefree rational integers). 

To see that this condition is well defined, we note that, since $\cO_K$ is a Dedekind domain, every ideal $I$ can be uniquely written as a product of powers of prime ideals (up to reordering), in the form \[I = \mathfrak{p}_1^{\alpha_1}\dots \mathfrak{p}_n^{\alpha_n}.\]
Consider the set $D_I$ of all ideals dividing $I$:
\[D_I = \{\mathfrak{p}_1^{i_1}\dots \mathfrak{p}_n^{i_n} \mid 0\le i_1\le \alpha_1, \dots, 0 \le i_n\le \alpha_n\}.\]
If the set $D_I$ does not contain any squares of principal ideals, we say that the ideal $I$ has no squares of principal ideals in its prime factorisation.

We note that \eqref{eqn:main} will always have the \emph{trivial solution} $(x,y,z) = (0,0,0)$. For this reason, we shall call solutions with $x$, $y$ and $z$ not all zero \emph{non-trivial solutions}, and we will devote our work to finding them. If the class number of $K$ is greater than $1$, there is no notion of the greatest common divisor and, consequently, we cannot a priori generalise the notion of primitive solutions.

In addition, we remark that, when working over number fields, it it not always possible to look for solutions where $x, y, z \in \cO_K$, and so we will be interested in solutions where $x, y, z \in K$. In certain instances, such as if $K$ has class number one, it will be possible to consider solutions where $x, y$ and $z$ are algebraic integers and we will explain this along the text.

It is surprising to note the scarcity of literature on generalisations of the Pythagorean equation over number fields. Virtually all existing works (see \cite{Hemer, LealRuperto, VargasSantos}, for example) focus on the case where $K$ is an Euclidean imaginary quadratic field, where they show sufficient and necessary conditions for solvability and extend Hölzer's Theorem.

In this work, we attempt to take a systematic approach to generalise the results known over $\Q$ to arbitrary number fields in as much generality as possible. To the best of our knowledge, this is the first paper dealing with solutions of generalised Pythagorean equations in this generality, and we hope that it can be useful for researchers interested in the generalised Pythagorean equation. For this purpose, we briefly outline the main results of this paper.


Firstly, we have found a set of necessary and sufficient conditions (Corollary \ref{cor:conditions}) which, if satisfied, guarantee the existence of solutions to \eqref{eqn:main} over $K$. These conditions depend only on the coefficients $a$, $b$ and $c$, as well as on the number field $K$ under consideration. 

If these conditions are fulfilled, there are infinitely many solutions, parameterised by the formulae in Proposition \ref{prop:parameterisation}, which require the use of a particular solution of \eqref{eqn:main}. In order to find one solution in a computationally effective manner, we have generalised Legendre's descent algorithm in Algorithm \ref{alg:Legendre}.

Finally, if $K$ is an Euclidean imaginary field, Díaz--Vargas and Vargas de los Santos (\cite{VargasSantos}) extended the notion of Hölzer-reducedness to solutions of \eqref{eqn:main} over $K$. We shall build upon their work to develop an algorithm (Algorithm \ref{alg:ReduceSolution}) that finds a reduced solution to \eqref{eqn:main}.


Our aim throughout the article is to make the research as accessible as possible to all researchers interested in the explicit resolution of \eqref{eqn:generalisedPythagorean}, which is why we will incorporate numerous examples in the text.

The organisation of this paper is as follows. We remark that each section is independent of the rest, and can be read without loss of continuity.
In Section \ref{Sec:conditions}, we use the Hasse--Minkowski theorem to prove a set of necessary and sufficient conditions for \eqref{eqn:main} to have solutions. In Section \ref{Sec:parametric}, we extend Mordell's work \cite{Mordellbook} to parameterise all solutions $(x,y,z) \in K^3$ to \eqref{eqn:main}. In Section \ref{Sec:algorithm}, we provide a generalisation of Legendre's descent algorithm to find a solution $(x,y,z) \in K^3$ and illustrate it with an example. In Section \ref{Sec:minimal}, we adapt the notion of Hölzer-reduced solutions and develop an algorithm to find a minimal solution from an existing one. Finally, in Section \ref{Sec:conclusions} we briefly discuss potential future lines of work.

\textbf{Acknowledgements: }The author would like to thank Antonella Perucca for suggesting a problem motivating this paper, Martin Orr for his reading and comments of this manuscript, as well as Claus Fieker and Damein Stehlé for kindly sharing their code.

\section{Conditions for solvability}
\label{Sec:conditions}
In this section, we shall determine necessary and sufficient conditions for \eqref{eqn:main} to have solutions over a number field $K$. We recall from Section 1 that we are only interested in \emph{non-trivial solutions} of \eqref{eqn:main}, i.e. solutions $(x,y,z) \in K^3$ where at least one of $x$, $y$ and $z$ is non zero.

For this purpose, we shall use the Hasse--Minkowski theorem (see \cite{Hasse1, Hasse2, Hasse3, Hasse4} for the original proof of the statement or \cite[\S 66]{HasseMinkowski} for a more sucint proof), which is generally known as the \emph{local--global principle}. The version of the theorem that we will need is the following:

\begin{theorem}(Hasse-Minkowski)
    \label{thm:HasseMinkowski}
    Let $n \ge 1$, let $K$ be a number field and let $Q(X_1, \dots, X_n) \in K[X_1, \dots, X_n]$ be a quadratic form defined over $K$. Let $S$ be the set of all places of $K$. For any $\nu \in S$, we will denote the completion of $K$ at the place $\nu$ by $K_\nu$. Then, the equation
    \[Q(X_1, \dots, X_n) = 0,\]
    has a solution $(\alpha_1, \dots, \alpha_n) \in K^n \setminus \{(0, 0, \dots, 0)\}$ if and only if it has a solution $(\alpha_{1,v}, \dots, \alpha_{n,v}) \in K_\nu^n \setminus \{(\tilde{0}, \tilde{0}, \dots, \tilde{0})\}$ for all $\nu \in S$. 
\end{theorem}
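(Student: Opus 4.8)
The plan is to follow the classical strategy, originally due to Minkowski over $\Q$ and extended by Hasse to arbitrary number fields. Since $K$ has characteristic zero, I would first diagonalise the quadratic form, reducing to the case $Q(X_1,\dots,X_n) = a_1 X_1^2 + \dots + a_n X_n^2$ with each $a_i \in K^\times$. The forward implication is immediate: a non-trivial global zero is automatically a non-trivial local zero at every place $\nu \in S$. The substance is the converse, and my approach would be induction on the number of variables $n$.

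The base cases are manageable. For $n=1$ the form $a_1 X_1^2$ has no non-trivial zero over any field, so there is nothing to prove. For $n=2$, the form $a_1 X_1^2 + a_2 X_2^2$ is isotropic precisely when $-a_1/a_2$ is a square in the field in question; the assertion then reduces to the local-global principle for squares, namely that an element of $K^\times$ which is a square in every completion $K_\nu$ is already a square in $K$. I would establish this using the finiteness of the class group together with Dirichlet's unit theorem to control all valuations and archimedean signs simultaneously.

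The hard part will be the case $n=3$, which is where the genuine arithmetic input enters. Here I would encode isotropy of $a_1 X_1^2 + a_2 X_2^2 + a_3 X_3^2$ in terms of Hilbert symbols $(\,\cdot\,,\,\cdot\,)_\nu$, thereby reducing the problem to showing that a family of local symbols which is consistent at every place arises from a single global element. The crucial ingredients are Hilbert's reciprocity law, namely the product formula $\prod_{\nu \in S} (\alpha,\beta)_\nu = 1$, together with an existence statement for elements of $K$ having prescribed local behaviour, which rests on Dirichlet's theorem on primes in arithmetic progressions or, over a general $K$, on the Chebotarev density theorem. Since Hilbert reciprocity for an arbitrary number field is itself equivalent to the main reciprocity statements of global class field theory, this is where the full depth of the theorem is concentrated.

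Finally, the cases $n \ge 4$ would be reduced to the lower ones. For $n=4$ I would split the form as $\langle a_1, a_2 \rangle \perp \langle -a_3, -a_4 \rangle$ and run a local-global argument on the common value represented by the two binary pieces, invoking the weak approximation theorem to patch the local representations into a global one. For $n \ge 5$ the key simplification is that a non-degenerate quadratic form in at least five variables over a non-archimedean completion $K_\nu$ is automatically isotropic, so that only the finitely many archimedean and ramified places impose real conditions; an approximation argument then descends a global isotropic vector from the local data. Throughout, the decisive obstacle remains the reciprocity step in the case $n=3$, with the higher-rank cases amounting to careful bookkeeping built on top of it.
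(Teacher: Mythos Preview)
Your outline is a faithful sketch of the classical proof strategy, but note that the paper does not actually prove this theorem: it is stated as Theorem~\ref{thm:HasseMinkowski} and immediately attributed to the literature, with references to Hasse's original papers and to O'Meara's \emph{Introduction to Quadratic Forms}, \S 66, for a full argument. So there is no in-paper proof to compare against; the theorem functions purely as a black box that feeds into Corollary~\ref{cor:conditions}.

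That said, your plan matches the standard route taken in the cited references: diagonalise, handle $n\le 2$ by the global square theorem, concentrate the real arithmetic in the case $n=3$ via Hilbert symbols and reciprocity, and then climb to $n\ge 4$ by splitting off binary pieces and invoking approximation, using automatic isotropy over non-archimedean completions once $n\ge 5$. As a proof \emph{proposal} this is entirely sound; just be aware that the step you flag as ``Hilbert reciprocity for an arbitrary number field'' is doing essentially all the work and is not something one proves in passing---it genuinely requires the machinery of class field theory (or an equivalent input), so a complete write-up would either import that result wholesale or restrict to $K=\Q$ where quadratic reciprocity suffices.
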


Using the Hasse--Minkowski's theorem will allow us to find a set of necessary and sufficient conditions for \eqref{eqn:main} to have non-trivial solutions over a given number field $K$. This is the content of the following corollary, which deals with the case where $a\cO_K$, $b\cO_K$ and $c\cO_K$ have no common prime ideal in their factorisation. This corollary extends existing results over $\Q$ (\cite[Section 2.2]{Cremona}) and over certain Euclidean imaginary quadratic fields (\cite{LealRuperto, VargasSantos}).
\begin{corollary}
    \label{cor:conditions}
    Let $K$ be a number field, with ring of integers $\cO_K$ and assume that there is no prime ideal $\mathfrak{p}$ dividing two of $a\cO_K$, $b\cO_K$ and $c\cO_K$. Then, \eqref{eqn:main} has a non-trivial solution if and only if the following conditions hold:
    \begin{enumerate}
        \item Either $K$ is totally imaginary or for any real embedding $i: K \hookrightarrow \mathbb{R}$, precisely two of $i(a), i(b)$ and $i(c)$ have the same sign.
        \item There is a solution $\alpha \in \cO_K$ to the congruence
        \begin{equation}
            \label{eqn:congruenceA}
        X^2 \equiv -bc \pmod{a\cO_K},
        \end{equation}
        a solution $\beta \in \cO_K$ to the congruence
        \begin{equation}
            \label{eqn:congruenceB}
        X^2 \equiv -ac \pmod{b\cO_K},
        \end{equation}
        and a solution $\gamma \in \cO_K$ to the congruence
        \begin{equation}
            \label{eqn:congruenceC}
        X^2 \equiv -ab \pmod{c\cO_K}.
        \end{equation}
        \item For all prime ideals $\mathfrak{p}$ dividing $2\cO_K$, there is a solution $(\alpha, \beta, \gamma) \in \cO_K^3$ to the congruence
        \[ax^2 + by^2 + cz^2 \equiv 0 \pmod{\mathfrak{p}^{2v+1}},
        \]
        for some $v \ge 1$ and such that either
        \[2a\alpha \not \equiv 0 \pmod{\mathfrak{p}^{v+1}},\]
        or
               \[2b\beta \not \equiv 0 \pmod{\mathfrak{p}^{v+1}},\]
               or
                      \[2c\gamma \not \equiv 0 \pmod{\mathfrak{p}^{v+1}}.\]
    \end{enumerate}
\end{corollary}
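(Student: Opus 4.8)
The plan is to invoke the Hasse--Minkowski theorem (Theorem \ref{thm:HasseMinkowski}) applied to the nondegenerate ternary quadratic form $Q(x,y,z) = ax^2 + by^2 + cz^2$, for which non-triviality of a solution over $K$ is equivalent to non-triviality over every completion $K_\nu$. The strategy is therefore to translate local solvability at each type of place $\nu$ into the three stated conditions, making essential use of the hypothesis that no prime $\mathfrak{p}$ divides two of $a\cO_K, b\cO_K, c\cO_K$: this guarantees that at every finite place at most one of the three coefficients has positive valuation, which is what keeps the local analysis uniform. I would group the places into the complex places, the real places, the finite places with $\mathfrak{p}\nmid 2abc\cO_K$, and the finite places dividing one coefficient, treating the dyadic primes last and separately.

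For the archimedean places, over a complex place $K_\nu = \mathbb{C}$ every ternary form is isotropic, so these impose no condition; over a real place $K_\nu = \mathbb{R}$ attached to an embedding $i$, the diagonal form $i(a)x^2 + i(b)y^2 + i(c)z^2$ has a non-trivial real zero precisely when it is indefinite, i.e. when $i(a), i(b), i(c)$ do not all share the same sign, which is exactly the ``precisely two of the same sign'' requirement. Collecting this over all real embeddings (there being none when $K$ is totally imaginary) yields condition (1). For a finite prime with $\mathfrak{p} \nmid 2abc\cO_K$ the coefficients are units and the residue characteristic is odd, so by the Chevalley--Warning theorem the nondegenerate reduced form has a non-trivial (hence smooth) zero over the residue field, which lifts by Hensel's lemma to a non-trivial zero over $K_\mathfrak{p}$; thus these primes impose no condition either.

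The substantive finite primes are those dividing exactly one coefficient, say $\mathfrak{p} \mid a\cO_K$, so that $v_\mathfrak{p}(a) = 1$ by the squarefree hypothesis while $b, c$ are units at $\mathfrak{p}$. For such an odd $\mathfrak{p}$ I would show, by a valuation argument on a primitive local zero combined with Hensel's lemma, that $Q$ is isotropic over $K_\mathfrak{p}$ if and only if $-bc$ is a square in $K_\mathfrak{p}^\times$, equivalently $-bc$ is a square modulo $\mathfrak{p}$. Since $a\cO_K$ is a product of distinct primes, the Chinese Remainder Theorem shows that imposing this at every odd $\mathfrak{p} \mid a\cO_K$ simultaneously is equivalent to solvability of the single congruence \eqref{eqn:congruenceA}; the analogous statements at the primes dividing $b\cO_K$ and $c\cO_K$ give \eqref{eqn:congruenceB} and \eqref{eqn:congruenceC}. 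This produces condition (2).

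The main obstacle is the dyadic primes $\mathfrak{p} \mid 2\cO_K$, where the naive form of Hensel's lemma fails because the derivative of the squaring map vanishes modulo $\mathfrak{p}$, so ``square modulo $\mathfrak{p}$'' no longer detects local squares and hence no longer detects isotropy. Here I would invoke the strong form of Hensel's lemma: a congruence solution $(\alpha, \beta, \gamma)$ of $Q \equiv 0 \pmod{\mathfrak{p}^{2v+1}}$ lifts to a genuine $\mathfrak{p}$-adic zero exactly when one of the partial derivatives $2a\alpha, 2b\beta, 2c\gamma$ has valuation at most $v$, i.e. is nonzero modulo $\mathfrak{p}^{v+1}$, while conversely any $\mathfrak{p}$-adic zero truncates to such a solution. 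This is precisely the content of condition (3), and carrying out this equivalence at the primes above $2$ carries the technical weight of the argument. Matching each place to its condition, the ``if and only if'' of Hasse--Minkowski then assembles conditions (1), (2) and (3) into the claimed criterion. I note that one could alternatively use the Hilbert reciprocity (product) formula to avoid checking one distinguished place, but since the statement lists all three conditions explicitly I would verify each local condition directly.
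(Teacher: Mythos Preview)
Your proposal is correct and follows essentially the same route as the paper: invoke Hasse--Minkowski and check each type of place separately, with condition~(1) handling the archimedean places, condition~(2) the odd primes dividing $abc$ (via Hensel), condition~(3) the dyadic primes (via strong/multivariate Hensel), and the remaining odd primes imposing no condition. The only cosmetic difference is that you invoke Chevalley--Warning at primes $\mathfrak{p}\nmid 2abc\cO_K$, whereas the paper uses the direct pigeonhole count that the images of $t\mapsto at^2$ and $s\mapsto -c-bs^2$ in $\cO_K/\mathfrak{p}$ each have size $(q+1)/2$ and hence must overlap; one small caveat is that under the paper's convention (``no square of a \emph{principal} ideal divides $a\cO_K$'') one need not have $v_\mathfrak{p}(a)=1$, but this does not affect your argument since for odd $\mathfrak{p}$ a $\mathfrak{p}$-adic unit is a square modulo $\mathfrak{p}^k$ iff it is modulo $\mathfrak{p}$.
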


\begin{proof}
    Firstly, we suppose that there is a solution $(\alpha, \beta, \gamma) \in K^3$ to \eqref{eqn:main} and let us see that the three conditions are satisfied. Condition (1) corresponds to the fact that, for any embedding $i$, $(\alpha, \beta, \gamma)$ will satisfy
    \begin{equation*}
    i(a)i(\alpha)^2 + i(b)i(\beta)^2+i(c)i(\gamma)^2 = 0,
    \end{equation*}
    and so $(x,y,z) =(i(\alpha), i(\beta), i(\gamma))$ is a solution to the equation 
    \begin{equation}
        \label{eqn:completion}
    i(a)x^2 + i(b)y^2 + i(c)z^2 = 0,
    \end{equation}
    We note that, since $a,b,c \neq 0$, this equation always has non-trivial solutions over $\mathbb{C}$ and only has non-trivial solutions over $\mathbb{R}$ if precisely two of $i(a)$, $i(b)$ and $i(c)$ have the same sign, whence condition (1) follows.

    Condition (2) is checked simply by reducing \eqref{eqn:main} modulo $a\cO_K$, $b\cO_K$ and $c\cO_K$. Finally, condition (3) follows again by reducing the identity 
    \[a\alpha^2 + b\beta^2+c\gamma^2 = 0,\]
    modulo $\mathfrak{p}^{2v+1}$, where $v \ge 1$ is chosen such that one of $2a\alpha$, $2b\beta$ or $2c\gamma$ do not lie in $\mathfrak{p}^{v+1}$.

    Conversely, we suppose that all three conditions are satisfied and we will show that there exists a non-trivial solution over $K$. By Theorem \ref{thm:HasseMinkowski}, it suffices to show that \eqref{eqn:main} has a non-trivial solution over all local completions, so we shall consider each completion separately.

    Firstly, let $\nu$ be an Archimedean place of $K$ with associated embedding $i$. As mentioned above, condition $(1)$ ensures that \eqref{eqn:completion} has a solution over $K_\nu$, regardless on whether $\nu$ is a complex or a real place.

    Now, let $\mathfrak{p}$ be a prime ideal of $\cO_K$ corresponding to a non-archimedean place $\nu$ of $K$. Firstly, we suppose that $\mathfrak{p} \nmid (2abc)\cO_K$, and let $q = \#\cO_K/\mathfrak{p}$. It is clear that the map $\phi: ({\cO_K}/{\mathfrak{p}})^* \mapsto ({\cO_K}/{\mathfrak{p}})^*$ given by $\phi(t) = at^2$ is two-to-one and thus contains $(q-1)/2$ elements in its image. If we extend $\phi$ to the whole ${\cO_K}/{\mathfrak{p}}$ by setting $\phi(0) = 0$, we conclude that there are $(q+1)/2$ elements in the image of $\phi$.

    By an identical argument, the same is true about the map $\phi': {\cO_K}/{\mathfrak{p}} \mapsto {\cO_K}/{\mathfrak{p}}$ given by $\phi'(s) = -bs^2$. Consequently, the set 
    \[\{(-c-bs^2) \pmod{\mathfrak{p}} \mid s \in \cO_K\}\]
    has $(q+1)/2$ elements, and so does the set
    \[\{at^2 \pmod{\mathfrak{p}} \mid t \in \cO_K\}.\]
    Since there are only $q$ elements in $\cO_K/\mathfrak{p}$, the two sets have an element in common and thus 
    \[a\alpha^2 + b\beta^2 + c \equiv 0\pmod{\mathfrak{p}},\]
    for some $\alpha, \beta \in \cO_K$. This means that $(\alpha, \beta, 1)$ is a solution to the congruence equation
    \[ax^2+by^2+cz^2\equiv 0 \pmod{\mathfrak{p}}.\]
    By Hensel's Lemma (see \cite[Prop. 4.1.37]{Cohen}), this can be lifted to a non-trivial solution over $K_\nu$.

    Suppose now that $\mathfrak{p} \nmid 2\cO_K$ and $\mathfrak{p} \mid (abc)\cO_K$. Without loss of generality, we shall assume that $\mathfrak{p} \mid a\cO_K$. Note that, by the assumptions in the statement of the theorem, this means that $\mathfrak{p}\nmid (bc)\cO_K$.
    
    Therefore, once again by Hensel's Lemma, \eqref{eqn:main} will have a non-trivial solution over $K_\nu$ if and only if there is a solution $(\beta, \gamma) \in \cO_K^2 \setminus \{({0}, {0})\}$ to the congruence equation
    \[b\beta^2 + c\gamma^2 \equiv 0 \pmod{\mathfrak{p}}.\]
    Since, by assumption, there is a solution to \eqref{eqn:congruenceA} and $\mathfrak{p} \nmid bc\cO_K$, this congruence is satisfied. 
    By proceeding in a similar manner, we can show that the same is true if either $\mathfrak{p} \mid b\cO_K$ or if $\mathfrak{p} \mid c\cO_K$ by using \eqref{eqn:congruenceB} and \eqref{eqn:congruenceC}, respectively. 

    Finally, suppose that $\mathfrak{p} \mid 2\cO_K$, and let $f(x,y,z) = ax^2+ by^2+cz^2 \in \Z[x,y,z]$. Then, the multivariate version of Hensel's Lemma (see \cite[Theorem 2.1]{Conrad}) yields that \eqref{eqn:main} has a solution over $K_\nu$ if there is an integer $v \ge 1$ and a solution $(x,y,z) \in \cO_K^3$ to the congruence equation
    \[f(x,y,z) \equiv 0 \pmod{\mathfrak{p}^{2v+1}},\]
    and such that at least one component of $\nabla f(x,y,z)$ is non-zero modulo $\mathfrak{p}^{v+1}$. This corresponds precisely to condition $(3)$ in the corollary.
\end{proof}

\begin{remark}
    Note that the assumption that no prime ideal divides two of $a\cO_K$, $b\cO_K$ and $c\cO_K$ was only used to prove condition (2). In Section \ref{subsec:noncoprime}, we will show how to reduce to Corollary \ref{cor:conditions} if $a\cO_K$, $b\cO_K$ and $c\cO_K$ have common prime factors.
\end{remark}

\begin{example}
    Let $K = \Q(\sqrt{-7})$ and we consider the equation 
    \begin{equation}
        \label{eqn:equationconditions}
    3x^2 + 2y^2 +13z^2 = 0.
    \end{equation}
     We check whether the three conditions in Corollary \ref{cor:conditions} are satisfied. Since $K$ is totally imaginary, condition $(1)$ follows. The congruences \eqref{eqn:congruenceA}, \eqref{eqn:congruenceB} and \eqref{eqn:congruenceC} amount to
    \[X^2 \equiv -26 \pmod{3\cO_K},\]
    \[X^2 \equiv -39 \pmod{2\cO_K},\]
    and 
    \[X^2 \equiv -6 \pmod{13\cO_K},\]
    respectively. The elements $\alpha = 1$, $\beta = 1$ and $\gamma = 5\sqrt{-7}$ satisfy the three congruences and, consequently, condition $(2)$ in Corollary \ref{cor:conditions} holds. 

    Finally, we check condition $(3)$. For this, we note that $2$ has the following factorisation in $\cO_K$:
    \[2 = \frac{1+\sqrt{-7}}{2}\cdot \frac{1-\sqrt{-7}}{2}.\]
    Let $\alpha = ({1+\sqrt{-7}})/{2}$ and $\beta = ({1-\sqrt{-7}})/{2}$. We note that 
    \[3\cdot 1^2 + 2\cdot 0^2 + 13\cdot 1^2 = 16 \equiv 0 \pmod{8},\]
    and, since $\alpha^3 \mid 8$, it is easy to see that 
    \[3\cdot 1^2 + 2\cdot 0^2 + 13\cdot 1^2 = 16 \equiv 0 \pmod{\alpha^3}.\]
    At the same time, we see that 
    \[2\cdot 3 \cdot 1 = 6 \not \equiv 0 \pmod{\alpha^2},\]
    so that condition $(3)$ is satisfied for the ideal $\alpha\cO_K$. An analogous computation shows that it is also satisfied for $\beta\cO_K$, whence Corollary \ref{cor:conditions} shows that \eqref{eqn:equationconditions} has some solutions over $K$. In fact, it is elementary to check that $(x,y,z) = (\sqrt{-7}, 2, 1) \in \cO_K^3$ is a solution to \eqref{eqn:equationconditions}.
\end{example}

\subsection{The non coprime case}
\label{subsec:noncoprime}
In order to complete our treatment of \eqref{eqn:main}, we need to consider the case where the ideals $a\cO_K$, $b\cO_K$ and $c\cO_K$ have a common prime factor $\mathfrak{p}$. Since the situation where $\mathfrak{p} \mid 2\cO_K$ is already covered in Corollary \ref{cor:conditions}, we shall only consider odd primes $\mathfrak{p}$. To do this, we will need the following lemma.

\begin{lemma}
    \label{lemma:complicated}
    Let $K$ be a number field with ring of integers $\cO_K$ and let $v$ be a non-Archimedean odd place such that at least two of $v(a), v(b)$ and $v(c)$ are positive. Without loss of generality, we shall assume that $v(a)\ge v(b) \ge v(c)$.
    
    Let $\mathfrak{p}$ be the corresponding ideal of $\cO_K$, $K_v$ denote the localisation of $K$ at $v$, $\cO_{K_v}$ its ring of integers and $\pi$ its uniformiser. Then
    \begin{enumerate}
        \item If $v(a),v(b),v(c) > 0$, \eqref{eqn:main} has a non-trivial solution over $\cO_{K_v}$ if and only if the equation 
        \[\frac{a}{\pi^{v(c)}}x^2 + \frac{b}{\pi^{{v(c)}}}y^2 + \frac{c}{\pi^{v(c)}}z^2 = 0,\]
        has a solution over $\cO_{K_v}$.
        \item If $v(a) > v(b) > 0$ and $v(c) = 0$, \eqref{eqn:main} has a non-trivial solution over $\cO_{K_v}$ if and only if there is a solution $\alpha \in \cO_K$ to the congruence
        \begin{equation}
            \label{eqn:newcongruence}
        X^2 \equiv -bc 
        \pmod{\mathfrak{p}^{v(a)}}.
        \end{equation}
        \item If $v(a) = v(b) > 0$ and $v(c) = 0$, we write $v(a) = 2v' + v''$. Then, \eqref{eqn:main} has a non-trivial solution over $\cO_{K_v}$ if and only if the equation 
        \begin{equation*}
        \frac{a}{\pi^{v(a)}}x^2 + \frac{b}{\pi^{{v(a)}}}y^2 + {c}{\pi^{v''}}z^2 = 0,
        \end{equation*}
        has a solution over $\cO_{K_v}$.
    \end{enumerate}
\end{lemma}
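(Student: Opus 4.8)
The plan is to reduce each case to an equation over the local ring $\cO_{K_v}$ where the valuations of the coefficients are smaller, so that either the problem becomes genuinely non-degenerate (covered by the coprime analysis already carried out in Corollary \ref{cor:conditions}) or collapses to a single congruence. The unifying observation is that over the local field $K_v$, scaling any variable $x \mapsto \pi^k x$ multiplies the corresponding coefficient by $\pi^{2k}$, so the \emph{parity} of each valuation $v(a), v(b), v(c)$ is the only intrinsic local invariant of the quadratic form. I would make this explicit at the start: a solution $(x,y,z) \in \cO_{K_v}^3$ is non-trivial precisely when it has a component that is a unit after clearing the common $\pi$-power, and multiplying the whole equation by a unit or by a fixed power of $\pi$ does not change solvability. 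These two moves — dividing the equation through by a power of $\pi$, and absorbing even powers of $\pi$ into the variables — are the only tools needed.

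For case (1), where $v(a), v(b), v(c) > 0$ with $v(c)$ minimal, I would simply divide the entire equation \eqref{eqn:main} by $\pi^{v(c)}$. Since $\pi^{v(c)}$ is a nonzero element of $K_v$, the set of solutions in $\cO_{K_v}^3$ is unchanged, giving the stated equivalence immediately; the new equation has $v\bigl(c/\pi^{v(c)}\bigr) = 0$, so it falls into the coprime regime. For case (2), where $v(a) > v(b) > 0$ and $v(c) = 0$, I would argue via Hensel-type lifting as in the proof of Corollary \ref{cor:conditions}: because $v(c) = 0$ and $v(b) > 0$, reducing modulo increasing powers of $\mathfrak{p}$ forces $z$ to carry a definite $\pi$-valuation and shows that solvability is controlled by whether $-bc$ is a square modulo $\mathfrak{p}^{v(a)}$, which is exactly congruence \eqref{eqn:newcongruence}. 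The key is to track valuations carefully: writing $v(x) = r$ etc., the three terms $ax^2, by^2, cz^2$ have valuations $v(a)+2r$, $v(b)+2s$, $2t$, and for these to cancel one needs at least two of them to achieve the minimum, which pins down the relations between $r, s, t$ and reduces the obstruction to the congruence modulo $\mathfrak{p}^{v(a)}$.

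Case (3), where $v(a) = v(b) > 0$ and $v(c) = 0$, is where I expect the genuine subtlety, and the decomposition $v(a) = 2v' + v''$ with $v'' \in \{0,1\}$ is the signal of this. Here both $ax^2$ and $by^2$ share the common valuation $v(a)$, so one cannot simply divide out and land in a coprime case without leaving residual $\pi$-powers. The plan is to substitute $x = \pi^{-v'}X$, $y = \pi^{-v'}Y$ (equivalently to factor $\pi^{2v'}$ out of the first two terms), which replaces $a$ and $b$ by $a/\pi^{2v'}$, $b/\pi^{2v'}$, each of valuation $v''$, and simultaneously rescale $z = \pi^{v'}Z$, multiplying $c$ by $\pi^{2v'}$; after dividing the whole equation by $\pi^{2v'}$ one is left with coefficients $a/\pi^{v(a)}, b/\pi^{v(a)}$ (valuation $0$) and $c\pi^{v''}$, matching the displayed target equation. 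The hard part is verifying that this change of variables is a \emph{bijection} on non-trivial $\cO_{K_v}$-solutions rather than merely a one-way implication — one must check that a primitive solution of the reduced equation pulls back to a genuine $\cO_{K_v}$-solution of the original (the scaling by $\pi^{-v'}$ threatens integrality) and conversely that integrality of the original solution survives the forward scaling. Resolving this will require arguing that any solution can be taken primitive (no common $\pi$-factor) and then using the valuation bookkeeping from case (2) to show the exponents $v'$ distribute correctly, so that the substitution stays inside $\cO_{K_v}$ in both directions.
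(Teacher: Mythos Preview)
Your treatment of cases (1) and (2) matches the paper's: divide through by $\pi^{v(c)}$ for (1), and for (2) combine a valuation count with Hensel's lemma exactly as in Corollary~\ref{cor:conditions}. The paper makes the converse of (2) explicit by multiplying the congruence \eqref{eqn:newcongruence} by $c$ to obtain $v(bc^2+c\alpha^2)\ge v(a)$, which forces $v(\alpha)=v(b)/2$ and hence lets the multivariate Hensel lemma apply to $f(1,c,\alpha)$; your sketch is consistent with this.

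Case (3), however, has a concrete algebraic slip. Your substitution $x=\pi^{-v'}X$, $y=\pi^{-v'}Y$, $z=\pi^{v'}Z$ produces coefficients $a/\pi^{2v'}$, $b/\pi^{2v'}$, $c\pi^{2v'}$, and no subsequent division by a power of $\pi$ turns these into the target triple $\bigl(a/\pi^{v(a)},\,b/\pi^{v(a)},\,c\pi^{v''}\bigr)$ except in the trivial situation $v'=0$. More importantly, scaling $x$ and $y$ by \emph{negative} powers of $\pi$ is exactly what generates the integrality headache you then worry about. The paper avoids this entirely: it leaves $x$ and $y$ alone, substitutes only $z=\pi^{v'+v''}z'$, and divides the whole equation by $\pi^{v(a)}=\pi^{2v'+v''}$, which yields the target directly. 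The one thing to check is that this substitution is reversible on $\cO_{K_v}$-points, and that follows from the single observation that any solution $(\alpha,\beta,\gamma)$ of the original satisfies $v(\gamma)\ge v'+v''$: indeed $c\gamma^2=-(a\alpha^2+b\beta^2)$ has valuation at least $v(a)=2v'+v''$, and since $2v(\gamma)$ is even this forces $2v(\gamma)\ge 2(v'+v'')$. With that, the forward map $\gamma\mapsto\gamma'=\gamma/\pi^{v'+v''}$ stays in $\cO_{K_v}$, and the converse $(\alpha,\beta,\gamma)\mapsto(\alpha,\beta,\pi^{v'+v''}\gamma)$ is trivially integral. So the ``hard part'' you anticipate dissolves once you pick the right variable to rescale.
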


\begin{proof}
    Case (1) is clear since we are simplifying equation \eqref{eqn:main} by dividing by $\pi^{v(c)}$. Note that, since $v(a)\ge v(b) \ge v(c)$, the new equation has at most two coefficients with positive valuation.

    For case (2), we first suppose that \eqref{eqn:main} has a non-trivial solution $(\alpha, \beta, \gamma) \in \cO_K^3$. Then, we see that the congruence holds in precisely the same manner as in the proof of Corollary \ref{cor:conditions}. Conversely, if the congruence equation holds, we may multiply \eqref{eqn:newcongruence} by $c$ to see that
    \[bc^2 + c\alpha^2 \equiv 0 \pmod{\mathfrak{p}^{v(a)}},\]
    and thus 
    \[v(bc^2 + c\alpha^2) \ge v(a).\]
    Since, by assumption, $v(bc^2) = v(b) < v(a)$ and $v(c) = 0$, this is only possible if 
    \[v(\alpha) = \frac{v(b)}{2} < \frac{v(a)}{2}.\]
    If we let $f(x,y,z) = ax^2+by^2+cz^2$, the previous computation shows that 
    \[v(f(1, c, \alpha)) \ge v(a),\]
    while
    \[2v\left(\left(\frac{\partial f}{\partial z}\right)(1,c,\alpha)\right) = 2v(2c\alpha) < v(a), \]
    and the multivariate version of Hensel's Lemma shows that there is a solution to \eqref{eqn:main}, in the same vein as in the proof of Corollary \ref{cor:conditions}.

    Finally, suppose that \eqref{eqn:main} has a solution $(\alpha, \beta, \gamma) \in \cO_{K_v}$ and let us show that condition (3) holds. If $v(a) = v(b) = 2v'+v''$ and $v(c) = 0$, 
    it follows that $v(\gamma) \ge v' + v''/2$. We may then write $a=\pi^{2v'+v''}a'$ and $b = \pi^{2v'+v''}b'$, where $v(a') = v(b') = 0$. Similarly, we write $\gamma^2 = \pi^{v(a)+v''}(\gamma')^2 = \pi^{2(v'+v'')}(\gamma')^2$, where $v(\gamma') \ge 0$. Then, we rewrite \eqref{eqn:main} as 
    \begin{equation}
        \label{eqn:auxiliarygammaprime}
    \pi^{2v'+v''}a'x^2 + \pi^{2v'+v''}b'y^2 + c\pi^{2v'+v''}\pi^{v''}z^2 = 0.
    \end{equation}
    Dividing the equation by $\pi^{v(a)} = \pi^{2v'+v''}$, we note that $(\alpha, \beta, \gamma') \in \cO_{K_v}^3$ is a solution to \eqref{eqn:auxiliarygammaprime}. Conversely, if \eqref{eqn:auxiliarygammaprime} has a solution $(\alpha, \beta, \gamma) \in \cO_{K_v}^3$, it is elementary to see that $(\alpha, \beta, \pi^{v'+v''}\gamma) \in \cO_{K_v}^3$ is a solution to \eqref{eqn:main}, thereby proving the lemma.
\end{proof}

\begin{remark}
    
    Note that case (3) in the previous lemma can be treated directly by applying Corollary \ref{cor:conditions} (where congruences now should be taken over the appropiate local field, rather than over $\cO_K$). Similarly, case (1) can also be reduced to a previously solved case.

    We have preferred to avoid incorporating these complicated cases in Corollary \ref{cor:conditions}, but we remark that this gives a complete set of conditions to check, even if $a\cO_K$, $b\cO_K$ and $c\cO_K$ have common factors.
\end{remark}

\section{A parametric formula to find all solutions}
\label{Sec:parametric}
If the conditions in Corollary \ref{cor:conditions} or Lemma \ref{lemma:complicated} are satisfied, we know that \eqref{eqn:main} has at least one solution over $K$. In this case, we shall show that it has infinitely many solutions and that, furthermore, they can all be parameterised explicitly. In this section, we shall generalise the slope method used over $\Q$ (see \cite[Section 6.3.2]{Cohen}) in order to obtain these parameterisations.

\begin{proposition}
\label{prop:parameterisation}
    Let $K$ be a number field and let $(\alpha_0, \beta_0, \gamma_0) \in K^3$ be a non-trivial solution of \eqref{eqn:main} with $\gamma_0 \neq 0$. Then, \eqref{eqn:main} has infinitely many non-trivial solutions $(x,y,z)$ with $z \neq 0$ and they can be parameterised with the following formulae:
    \[\begin{cases}
        x = \pm d((bm^2 - an^2) \alpha_0 - 2bmn\beta_0), \\
        y = \pm d(-2amn\alpha_0 + \beta_0 (an^2-bm^2)), \\
        z = \pm d\gamma_0 (an^2+bm^2),
    \end{cases}
    \]
    where $m, n \in \cO_K$, and $d \in K^*$.
\end{proposition}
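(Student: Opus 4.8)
The plan is to use the classical chord (secant) method for parameterising the points of a conic, adapted to the number field $K$. Writing $Q(x,y,z) = ax^2 + by^2 + cz^2$ for the ternary quadratic form and $B$ for its associated symmetric bilinear form, so that $B((x_1,y_1,z_1),(x_2,y_2,z_2)) = ax_1x_2 + by_1y_2 + cz_1z_2$, I would regard a non-trivial solution as a point of the projective conic $Q = 0$ in $\mathbb{P}^2(K)$, since \eqref{eqn:main} is homogeneous. The point $P_0 = (\alpha_0, \beta_0, \gamma_0)$ lies on this conic by hypothesis, and the guiding idea is that every line through $P_0$ meets the conic in exactly one further point, giving a map from the pencil of lines through $P_0$ (a $\mathbb{P}^1$) onto the conic.

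First I would take a direction vector of the special shape $D = (n, m, 0)$ with $m, n \in \cO_K$ and consider the line $P_0 + tD$. Substituting into $Q$ gives $Q(P_0 + tD) = Q(P_0) + 2tB(P_0, D) + t^2 Q(D)$, and since $Q(P_0) = 0$ this factors as $t\bigl(2B(P_0,D) + tQ(D)\bigr)$; hence the second intersection point is recorded projectively by $Q(D)\,P_0 - 2B(P_0,D)\,D$, which avoids any division. Computing $Q(D) = an^2 + bm^2$ and $B(P_0,D) = a\alpha_0 n + b\beta_0 m$ and expanding then reproduces the three displayed coordinates exactly, up to the overall scalar $d \in K^*$ reflecting the freedom to rescale a projective representative (the explicit $\pm$ is subsumed by letting $d$ range over all of $K^*$). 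This forward verification is routine. Note that the third coordinate is $Q(D)\gamma_0 = \gamma_0(an^2 + bm^2)$, which, using $\gamma_0 \neq 0$, is non-zero precisely when $an^2 + bm^2 \neq 0$, in agreement with the requirement $z \neq 0$.

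To show that every non-trivial solution $(x,y,z)$ with $z \neq 0$ arises this way, I would run the construction in reverse. Given such a solution $P = (x,y,z)$, the line joining $P$ and $P_0$ has the direction vector $\gamma_0 P - z P_0 = (\gamma_0 x - z\alpha_0,\ \gamma_0 y - z\beta_0,\ 0)$, whose third coordinate vanishes by design; reading off $n = \gamma_0 x - z\alpha_0$ and $m = \gamma_0 y - z\beta_0$ gives a vector of the required form $D = (n,m,0)$, which is non-zero unless $P = P_0$ projectively, and feeding it back into the formulas recovers $P$ up to scaling (absorbed into $d$). The one point needing care is integrality: these $m,n$ a priori lie in $K$, so I would clear a common denominator $e \in \cO_K\setminus\{0\}$, replacing $(m,n)$ by $(em,en)$; because the bracketed expressions are homogeneous of degree two in $(m,n)$, this merely multiplies the output by $e^2$, which is pushed into $d \in K^*$. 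This is exactly why the statement permits $m,n \in \cO_K$ while only requiring $d \in K^*$. The tangent case $B(P_0,D)=0$ returns $P_0$ itself, so no solution with $z \neq 0$ is omitted.

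Finally, infinitude follows because $\mathbb{P}^1(K)$ is infinite and distinct ratios $(m:n)$ yield distinct points: if two directions produced the same second point $P \neq P_0$, the two lines through $P_0$ and $P$ would coincide, forcing the directions to agree projectively. The step I expect to require the most care is the completeness argument together with the integrality reduction, since matching the free parameters $(m,n,d)$ to an arbitrary prescribed solution — and in particular justifying that $m$ and $n$ may be taken in $\cO_K$ — is where the projective bookkeeping must be done honestly; the forward direction is a mechanical substitution.
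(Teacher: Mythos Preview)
Your proof is correct and follows essentially the same approach as the paper: both use the chord/slope method, intersecting the conic with the pencil of lines through $P_0$, and then clearing denominators to land $m,n$ in $\cO_K$. The only difference is cosmetic---you work projectively with direction vectors $D=(n,m,0)$ and the identity $Q(P_0+tD)=t\bigl(2B(P_0,D)+tQ(D)\bigr)$, whereas the paper dehomogenises to the affine chart $z=1$ and uses a slope parameter $t=m/n$ before rehomogenising; your version is slightly cleaner in that it avoids divisions and handles the ``vertical'' direction $n=0$ without a separate case.
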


\begin{remark}
    The assumption that $\gamma_0 \neq 0$ is not restrictive. Indeed, since the solution $(\alpha_0, \beta_0, \gamma_0)$ is non-trivial, at least one of $\alpha_0$, $\beta_0$ and $\gamma_0$ is non-zero. By renaming the monomials $ax$, $by$ and $cz$ as necessary, we may assume that $\gamma_0 \neq 0$, as desired. Similarly, it may be necessary to change the order of $\alpha_0$, $\beta_0$ and $\gamma_0$ in order to find solutions with $z \neq 0$.     
\end{remark}

\begin{proof}
    Firstly, we begin by characterising all solutions of \eqref{eqn:main} with $z = 1$. For this purpose, let $(\alpha,\beta,1) \in K^3$ be a solution of \eqref{eqn:main} and let $t \in K$ be such that 
    \[\beta - \frac{\beta_0}{\gamma_0} = t\left(\alpha-\frac{\alpha_0}{\gamma_0}\right),\]
    or, alternatively,
    \[\beta = \frac{\beta_0}{\gamma_0} + t\left(\alpha-\frac{\alpha_0}{\gamma_0}\right).\]
    Note that the previous expression is the family of all lines going through the point $\left(\frac{\alpha_0}{\gamma_0}, \frac{\beta_0}{\gamma_0}\right)$ in the variables $\alpha$ and $\beta$ and parameterised by their slope $t$. Since $(\alpha,\beta,1)$ is a solution of \eqref{eqn:main}, the following system of equations is satisfied:
    \begin{equation}
        \label{eqn:auxsystem}
    \begin{cases}
      \beta = \frac{\beta_0}{\gamma_0} + t\left(\alpha-\frac{\alpha_0}{\gamma_0}\right), \\
      a\alpha^2 + b\beta^2 + c = 0.     
    \end{cases}
    \end{equation}
    As $t$ varies, solutions of the previous system correspond to all solutions of \eqref{eqn:main} with $z=1$. Indeed, apart from the solution
    \[(\alpha, \beta) = \left(\frac{\alpha_0}{\gamma_0}, \frac{\beta_0}{\gamma_0}\right)\]
    the system \eqref{eqn:auxsystem} has solutions given by
    \begin{equation*}
    \begin{cases}
        \alpha = \frac{(bt^2-a)\alpha_0-2bt\beta_0}{(bt^2+a)\gamma_0}, \\
        \beta = -\frac{2at\alpha_0 + (bt^2-a)\beta_0}{(bt^2+a)\gamma_0}. \\
    \end{cases}
    \end{equation*}
    Since $K$ is the field of fractions of $\cO_K$, we can write $t = m/n$ for some $m, n \in \cO_K$, so that 
    \begin{equation*}
    \begin{cases}
        \alpha = \frac{(bm^2-an^2)\alpha_0-2bmn\beta_0}{(bm^2+an^2)\gamma_0}, \\
        \beta = -\frac{2anm\alpha_0 + (bm^2-an^2)\beta_0}{(bm^2+an^2)\gamma_0}, \\
    \end{cases}
    \end{equation*}
    In addition, we note that a general solution $(x,y,z) \in K^3$ of \eqref{eqn:main} with $z \neq 0$ can be written as $(x,y,z) = (\pm \delta\alpha, \pm \delta\beta, \pm \delta)$ for some $\delta \in K$ (where the choice of sign for each variable is independent of the choice of sign for the others). We therefore obtain the expressions in the statement of the Proposition by choosing $\delta = d(bm^2+an^2)\gamma_0$ and picking the sign independently for each of the three variables $x, y$ and $z$.
\end{proof}

\begin{remark}
    If $K$ has class number one, the proof of the previous proposition can be adapted to find all solutions over $\cO_K$, rather than $K$. Indeed, suppose that $(\alpha_0, \beta_0, \gamma_0) \in \cO_K$ and for $m, n \in \cO_K$, define 
    \[D_{m,n} = \gcd((bm^2 - an^2)\alpha_0 - 2bmn\beta_0, -2amn\alpha_0 + \beta_0(an^2-bm^2), \gamma_0(an^2+bm^2)).\]
    Then, all solutions $(x,y,z) \in \cO_K^3$ to \eqref{eqn:main} can be obtained with the formulae:
    \[\begin{cases}
        x = \pm \frac{d}{D_{m,n}}((bm^2 - an^2)\alpha_0 - 2bmn\beta_0) \\
        y = \pm \frac{d}{D_{m,n}}(-2amn\alpha_0 + \beta_0(an^2-bm^2)) \\
        z = \pm \frac{d}{D_{m,n}}\gamma_0(an^2+bm^2),
    \end{cases}
    \]
    where $m, n \in \cO_K$ and $d$ now belongs to $\cO_K$, rather than $K$. In addition, picking $d = 1$ gives all primitive solutions. We remark that the previous is not a closed formula, since, in principle, the value of $D_{m,n}$ needs to be computed for each value of $m$ and $n$. 
\end{remark}

\section{An algorithm to find one solution}
\label{Sec:algorithm}
Due to Proposition \ref{prop:parameterisation}, we know that the computational complexity of completely resolving \eqref{eqn:main} is equivalent to that of finding one solution, since we have a parameterisation for all other solutions.

As we have explained in Section \ref{Sec:background}, there are several computationally efficient algorithms to compute particular solutions over $\Q$, which often rely on the LLL algorithm and simplify the equation under consideration.


Over number fields of class number greater than one, many of these methods are not available due to the failure of unique factorisation. For instance, we see that the algorithm by Cremona and Ruskin which bypasses factorisation of the coefficients (\cite[Algorithm II]{Cremona}), as well as the method by Simon \cite{Simon} do not generalise to arbitrary number fields.

However, we can adapt the original descent method by Legendre 
in order to find solutions. For simplicity, we shall assume that \eqref{eqn:main} is written in \textbf{norm form equation}, that is:
\begin{equation}
    \label{eqn:normform}
x^2 - Ay^2 = Bz^2,
\end{equation}
where $A, B \in \cO_K$ satisfy that the prime factorisations of the ideals $A\cO_K$ and $B\cO_K$ contain no squares of any principal ideals. Note that we may turn \eqref{eqn:main} into a norm form equation by multiplying by $a$ and absorbing squares into $x$, $y$ and $z$. We note that finding a solution to \eqref{eqn:normform} is equivalent to finding an element $\alpha \in K(\sqrt{A})$ with 
\[\Norm_{K(\sqrt{A})/K}(\alpha) = B.\]
In this sense, we remark that there are algorithms for resolving the \textbf{generalised Pell equation} 
\begin{equation}
    \label{eqn:Pell}
x^2 -dy^2 = c
\end{equation}
over number fields (see for example \cite{normequations}) but they are very inefficient if the coefficients $c$ and $d$ are relatively large. Therefore, in order to find a solution to \eqref{eqn:normform}, we first need to reduce $A$ and $B$ before solving a generalised Pell equation using the existing algorithms.

For this, we present the following algorithm. Our implementation is an adaptation of \cite[Algorithm I]{Cremona}, which is itself based upon Legendre's descent method. In the presentation of the algorithm, we will let $|\cdot|$ denote the complex modulus.

\begin{algorithm} \texttt{LegendreDescent} 
\label{alg:Legendre}

    \noindent \textbf{Input:} The coefficients $A$ and $B$ of the equation $x^2 - Ay^2 = Bz^2$ and the number field $K$ over which we wish to find a non-trivial solution.

    \noindent \textbf{Output:} A tuple $(x, y, z) \in K^3$ satisfying $x^2 - Ay^2 = Bz^2$.

    \noindent \textbf{Algorithm:}
    
    \noindent \textbf{1.-} Find the subfield $K' \subseteq K$ of smallest degree where $A$ and $B$ are defined and for which the local conditions in Corollary \ref{cor:conditions} hold, and replace $K$ by $K'$. If there is no such field, the equation has no solutions over $K$.

    \noindent \textbf{2.-} If $|A| > |B|$, find a solution $(\alpha', \beta', \gamma')$ to the equation $(x')^2-B(y')^2 = A(z')^2$ using \texttt{LegendreDescent} and return $(x, y, z) = (\alpha', \gamma', \beta')$.

    \noindent \textbf{3.-} If $B \in \cO_K$ is a unit, find a solution $(\alpha_0, \beta_0)$ to the generalised Pell equation $x_0^2 -Ay_0^2 = B$, and return $(x, y, z) = (\alpha_0, \beta_0, 1)$.

    \noindent \textbf{4.-} If $A \in \cO_K$ is a unit, find a solution $(\alpha_0, \gamma_0)$ to the generalised Pell equation $x_0^2 - Bz_0^2 = A$ and return $(x, y, z) = (\alpha_0, 1, \gamma_0)$.

    \noindent \textbf{5.-} If $A = B$, find a solution $(\alpha_0, \gamma_0)$ to the generalised Pell equation $x_0^2 + z_0^2 = B$ and return $(x, y, z) = (B, \alpha_0, \gamma_0)$.

    \noindent \textbf{6.-} We recall that, after step 2, we have that $|A| \le |B|$. Let $w \in \cO_K$ be a solution to the congruence equation 
    \begin{equation}
        \label{eqn:congruencealgorithm}
    X^2 \equiv A \pmod{B\cO_K},
    \end{equation}
    with $|w|$ as small as possible. If $|w| < |B|-1$, proceed to step 7. Otherwise, solve the generalised Pell equation $x^2-Ay^2 = B$ and return the solution $(x,y,1)$.

    \noindent \textbf{7.-} Use the LLL algorithm to find a solution $(\alpha_0, \beta_0) \in \cO_K^2$ to the congruence equation 
    \begin{equation}
        \label{eqn:smallcongruence}
    x_0^2-Ay_0^2 \equiv 0 \pmod{B\cO_K},
    \end{equation}
    with $|\alpha_0|^2 + |A||\beta_0|^2$ as small as possible. We refer the reader to Remark \ref{rmk:lll} for more details on how this reduction step is performed.

    \noindent \textbf{8.-} Let $t \in \cO_K$ be the element defined by
    \begin{equation}
        \label{eqn:deft}
    t = \frac{\alpha_0^2-A\beta_0^2}{B}.
    \end{equation}
    Write the ideal $t\cO_K$ as 
    \[t\cO_K = (t_1\cO_K)(t_2\cO_K)^2,
    \]
    where there are no squares of principal ideals in the prime decomposition\footnote{ It is clear that there is always one factorisation of this form, since it suffices to take $t_1 = t$ and $t_2 = 1$. In order to check if there is another factorisation with $t_2\cO_K$ being a proper ideal of $\cO_K$, one would need to compute the prime factorisation of $t\cO_K$ and check if both the \emph{squarefree part} and the \emph{square part} are principal ideals.} of $t_1\cO_K$. By multiplying $t_1$ by the appropiate unit of $\cO_K$, we can assume that 
    \[t = t_1t_2^2.\]

    \noindent \textbf{9.-} Find a solution $(\alpha_1, \beta_1, \gamma_1)$ to the equation $x_1^2-Ay_1^2 = t_1z_1^2$ using \texttt{LegendreDescent}. Then, return 
    \begin{equation}
    \label{eqn:findsolutions}
    (x,y,z) = (\alpha_0\alpha_1 + A\beta_0\beta_1, \alpha_1\beta_0 + \alpha_0\beta_1, t_1t_2\gamma_1),
    \end{equation}
    which is a solution to the equation $x^2-Ay^2 = Bz^2$.
\end{algorithm}

Let us briefly comment on the algorithm, and on the computational improvements that it introduces. Firstly, we note that it is possible that \eqref{eqn:normform} has solutions over a field $K'$ strictly smaller than $K$. In this situation, working over $K'$ rather than $K$ can make the computation significantly easier.

In steps $3$, $4$ and $5$, we are left with instances of \eqref{eqn:normform} where no further reduction is possible. However, in these cases the resolution of the equation can be done by treating it as a generalised Pell equation, as in \eqref{eqn:Pell}. Since the coefficients are generally not too big, it is almost always feasible to resolve the equations that occur in these steps.

Then, in the remaining steps of the algorithm, we carry out the reduction step. We note that, since the equation has passed the local solubility test given by Corollary \ref{cor:conditions}, we know that the congruence equation \eqref{eqn:congruencealgorithm} has a solution. However, it is not always possible to attain a solution with $|w| < |B|-1$. For instance, consider the equation
\[x^2 - 2y^2 = 3z^2,\]
to be solved over $K = \Q(\sqrt{14})$. In step $6$, we would need to solve the congruence equation
\[X^2 \equiv 2 \pmod{3\cO_K},\]
whose solution with smallest complex modulus is 
\[w = \sqrt{14},\]
which clearly does not satisfy $|w| < 3-1 = 2$. In these cases, it is impossible to reduce \eqref{eqn:normform} further, and we need to solve the corresponding equation with the methods outlined in \cite{normequations}. Otherwise, and by the definition of $t$ in \eqref{eqn:deft}, it follows that:
\begin{equation}
    \label{eqn:conditiont}
|t| \le \frac{|\alpha_0|^2 + |A||\beta_0|^2}{|B|} \le \frac{|w|^2 + |A|}{|B|} < |B|,
\end{equation}
where the middle inequality uses the fact that both $(\alpha,\beta)$ and $(w,1)$ are solutions to \eqref{eqn:smallcongruence}, with $|\alpha|^2 + |A||\beta|^2 \le |\omega|^2 + |A|\cdot 1^2$. Consequently, \eqref{eqn:conditiont} shows that the equation that we obtain in Step 9 has smaller coefficients than our original equation. In practice, it is very common for the condition $|w| < |B| - 1$ to be fulfilled and so the resulting equations are often simpler.

\begin{remark}
    \label{rmk:lll}
    We give more details on how the LLL reduction algorithm is used in order to find a small solution to the congruence equation \eqref{eqn:smallcongruence}. Firstly, let $n$ be the degree of the number field $K$ and consider a basis $\{1, \beta_2, \dots, \beta_n\}$ for $\cO_K$, so that 
    \[\cO_K = \Z[1, \beta_2, \dots, \beta_n],
    \]
    and consider the $2n-$dimensional lattice given by 
    \[L = \cO_K^2 = \left(\Z[1, \beta_2, \dots, \beta_n]\right)^2 \cong \Z^{2n}.\]
    Let $w$ be a solution to the congruence equation \eqref{eqn:congruencealgorithm} and consider the following positive definite quadratic form on $L$:
    \begin{equation}
        \label{eqn:quadraticform}
    q(u, v) = |wu+Bv|^2 + |A||u|^2.
    \end{equation}
    Let $||\cdot||$ be the norm induced by the quadratic form \eqref{eqn:quadraticform}. By using the LLL algorithm, we can find a non-zero vector $(u_0, v_0) \in L$ such that $||(u_0, v_0)||$ is small. Then, we define
    \[(x_0, y_0) = (u_0w + Bv_0, u_0).
    \]
    It is elementary to see that $(x_0, y_0) \in \cO_K^2$ satisfies the congruence equation \eqref{eqn:smallcongruence} and that 
    \[|x_0|^2 + |A||y_0|^2 = ||(u_0, v_0)||
    \]
    is as small as possible. Since the LLL algorithm only approximates the shortest vector in the lattice, there is no guarantee that this procedure will give a vector shorter than $(w, 1)$. In practice, however, we have found that the results are almost always better.

    We remark that there is some amount of research on finding short vectors of lattices over number fields. For example, we highlight the work of Fieker and Stehlé \cite{latticesNFs} and we thank them again for sharing their code with us. However, we found that the computational performance of their algorithm was generally worse for our purposes than applying LLL over $\Z$, which is why we use this algorithm in our exposition.
    
\end{remark}

Finally, we remark that out of the three main computer algebra systems generally used in mathematical research (\texttt{SAGE} \cite{Sage}, \texttt{Pari/GP} \cite{Pari} and \texttt{Magma}), only \texttt{Magma} implements an algorithm similar to our Algorithm \ref{alg:Legendre} (see \cite[Section 119.5]{handbook}), where they reduce the equation under consideration to one with smaller coefficients.

However, apart from the brief remarks in the manual, we have not been able to find a reference for the implementation of the algorithm and this is why we believe that our work here could be useful to other researchers.

We finish this section by illustrating Algorithm \ref{alg:Legendre} with an example.

\begin{example}
    Consider the equation
    \begin{equation}
        \label{eqn:example}
    x^2 - 823z^2 = -1929y^2,
    \end{equation}
    where we consider solutions over $K = \Q(\sqrt{-6})$, which has class number $2$. Firstly, we observe that \eqref{eqn:example} does not have solutions over $\Q$ since the congruence equation
    \[x^2 \equiv 823 \pmod{1929}\]
    does not have solutions. Over $K$, it can be seen that 
    \[w =  643 + 723\sqrt{-6}\]
    satisfies that 
    \[w^2 \equiv 823 \pmod{1929\cO_K}, \]
    and, in addition, $|w| < 1929-1 = 1928$. Using the LLL algorithm, we find that the pair $(\alpha, \beta) \in \cO_K^2$ satisfying \eqref{eqn:smallcongruence} with $|\alpha|^2+|A||\beta|^2$ as small as possible is given by 
    \[(\alpha, \beta) = (163+83\sqrt{-6}, -7-\sqrt{-6}),\]
    and $t$ has expression
    \[t = 26+20\sqrt{-6}.\]
    This reduces the original equation to the simpler equation
    \[x^2-(26+20\sqrt{-6})y^2 = 823z^2,\]
    which we need to solve. We do this by repeatedly applying the algorithm and reducing the equation further. All iterations of this process, including the first equation, are given in Table \ref{tab:example}. 

    \begin{table}[!ht]
        \centering
        \begin{tabular}{||ccccc||}
        \hline 
          $A$ & $B$ & $\alpha$ & $\beta$ & $t$ \\
          \hline \hline 
          $823$ & $-1929$ & $-163+83\sqrt{-6}$ & $-7-\sqrt{-6}$ & $26+20\sqrt{-6}$\\
          $26+20\sqrt{-6}$ & $823$ & $4-12\sqrt{-6}$ & $3-\sqrt{-6}$ & $-2$\\
          $-2$ & $26+20\sqrt{-6}$ & $4+2\sqrt{-6}$ & $6+\sqrt{-6}$ & $2$ \\
          $-2$ & $2$ & - & - & -  \\
    \hline 
          \end{tabular}
        \caption{Iterations of the algorithm when applied to \eqref{eqn:example}}
        \label{tab:example}
    \end{table}
    We note that, as part of the algorithm, we end up needing to solve the equation 
    \[x^2 + 2y^2 = 2z^2,\]
    which has an elementary solution $(x,y,z) = (0,1,1)$. Despite the original equation not having solutions over $\Q$, we find an equation which does have rational solutions. Then, we simply pull back the solutions of each of the equations with the expression \eqref{eqn:findsolutions}, and obtain the solution 
    \[(x,y,z) = (-108508+13308\sqrt{-6}, 3092-1644\sqrt{-6}, 1120-1268\sqrt{-6}),
    \]
    to the original equation \eqref{eqn:example}. We note that, in each of the iterations, the value of $|B|$ was reduced by more than $50\%$ each time, which quickly leads to manageable values of $A$ and $B$.
\end{example}

\section{Minimal solutions}
\label{Sec:minimal}
As we have explained in Section \ref{Sec:background}, Hölzer showed that, if \eqref{eqn:main} has solutions, it necessarily has a solution $(\alpha, \beta, \gamma)$ satisfying \eqref{eqn:Holzercondition}.
 
 Over number fields, it is unclear whether an analogous result holds, but there are some partial results. Specifically, we highlight the following theorem by Díaz-Vargas and Vargas de los Santos \cite[Theorem 6]{VargasSantos}, generalising previous work by Leal Ruperto \cite[Theorem 1]{LealRuperto} over $\Q(i)$ and by Mordell \cite{Mordell} over $\Q$.

\begin{theorem}(Díaz-Vargas and Vargas de los Santos)
    \label{thm:holzer}
    Let $d = 1, 2, 3, 7$ or $11$ and consider the number field $K = \Q(\sqrt{-d})$, with ring of integers $\cO_K$. Assume furthermore that the Diophantine equation
    \[ax^2 + by^2 + cz^2 = 0,\]
    where $a, b, c \in \cO_K$ are fixed, has a solution $(\alpha, \beta, \gamma) \in \cO_K^3$. Then, it has a solution $(\alpha_0, \beta_0, \gamma_0) \in \cO_K^3$ with either
    \begin{equation}
        \label{eqn:holzer1}
    |\gamma_0| \le \sqrt{\frac{4}{3-d}|ab|} \quad \text{if } d = 1, 2,
    \end{equation}
    or
    \begin{equation}
        \label{eqn:holzer2}
    |\gamma_0| \le  \sqrt{\frac{16d}{-d^2+14d-1}|ab|} \quad \text{if } d=3,7,11. 
    \end{equation}
\end{theorem}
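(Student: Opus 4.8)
The plan is to mimic Mordell's classical descent proof of Hölzer's theorem over $\Q$, carefully controlling the arithmetic of the quadratic imaginary fields $K = \Q(\sqrt{-d})$ for the five Euclidean values $d = 1,2,3,7,11$. The key point that makes these fields tractable is that $\cO_K$ is a Euclidean domain with respect to the norm, so one has a division-with-remainder statement: for any $\mu \in \cO_K$ and any nonzero $\nu \in \cO_K$ there exists $q \in \cO_K$ with $|\mu - q\nu|^2$ bounded by a constant times $|\nu|^2$, where the constant depends on $d$. The two different bounds \eqref{eqn:holzer1} and \eqref{eqn:holzer2} reflect exactly the two different shapes of $\cO_K$: for $d = 1, 2$ the ring is $\Z[\sqrt{-d}]$ and the covering radius of the lattice gives the factor $4/(3-d)$, whereas for $d = 3,7,11$ the ring is $\Z[(1+\sqrt{-d})/2]$ and the relevant lattice constant produces $16d/(-d^2+14d-1)$. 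I would first establish this Euclidean constant $C_d$ as a lemma, since it is the geometric input driving everything.

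First I would set up the descent. Starting from a solution $(\alpha,\beta,\gamma) \in \cO_K^3$ of $ax^2+by^2+cz^2=0$ with $|\gamma|$ larger than the target bound, the aim is to produce a new solution with strictly smaller $|\gamma|$. Following Mordell, I would reduce $\alpha$ and $\beta$ modulo $\gamma$ (or an appropriate combination forced by the equation) using the Euclidean property, writing $\alpha = \alpha' + \gamma q_1$ and $\beta = \beta' + \gamma q_2$ with $\alpha', \beta'$ of controlled size, and then use the identity coming from the quadratic form to build a new triple whose third coordinate $\gamma_0$ satisfies $|\gamma_0| < |\gamma|$. Concretely, one expects a relation of the shape $\gamma \cdot \gamma_{\mathrm{new}} = a\alpha'^2 + b\beta'^2$ divided by $c$ (up to signs and the roles of $a,b,c$), so that bounding $|\alpha'|^2$ and $|\beta'|^2$ by $C_d|\gamma|^2$ yields $|\gamma_{\mathrm{new}}| \le C_d |ab| / |\gamma| \cdot(\text{constant})$, and the descent terminates precisely when $|\gamma|^2$ drops to the order of $C_d|ab|$, giving the stated bounds.

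The main obstacle — and where I would spend most of the effort — is the bookkeeping that keeps the reduced triple genuinely in $\cO_K^3$, remains a nontrivial solution (does not collapse to the zero vector), and yields a \emph{strict} decrease in $|\gamma|$ until the threshold is reached. Over $\Q$ this is handled by parity and gcd arguments; over $K$ one must instead track divisibility by the ideals $a\cO_K$, $b\cO_K$ and $c\cO_K$ and verify that the constructed $\gamma_{\mathrm{new}}$ is actually an algebraic integer, which is where the squarefree-ideal hypothesis on $a,b,c$ is used. I would also need to check the boundary case where the naive reduction fails to decrease $|\gamma|$ (when the remainder sits at the covering radius), handling it by an explicit adjustment as in Mordell's treatment. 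Since the theorem is quoted from \cite{VargasSantos}, I would structure the argument to isolate the field-dependent constant $C_d$ in one lemma and make the descent step uniform across the five values of $d$, so that the only place the distinction between \eqref{eqn:holzer1} and \eqref{eqn:holzer2} enters is the value of $C_d$.
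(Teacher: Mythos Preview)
The paper does not itself prove this theorem: it is quoted verbatim from \cite[Theorem 6]{VargasSantos}. What the paper does do is extract the constructive content of that proof as Algorithm~\ref{alg:ReduceSolution}, and the correctness paragraph following the algorithm makes explicit which inequality drives the descent. So the comparison you want is between your outline and the construction embodied in Algorithm~\ref{alg:ReduceSolution} (equivalently, Mordell's argument in \cite{Mordell} transported to $\cO_K$).

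Your high-level plan (Euclidean constant $C_d$ plus a Mordell-type descent, with the two lattice shapes of $\cO_K$ accounting for \eqref{eqn:holzer1} versus \eqref{eqn:holzer2}) matches the cited proof. The gap is in the descent step itself. You propose to reduce the solution coordinates $\alpha,\beta$ modulo $\gamma$ and then read off a new triple from a relation like $\gamma\cdot\gamma_{\text{new}}=(a\alpha'^2+b\beta'^2)/c$. That is not how the Mordell descent works, and as written it does not produce a new solution of $ax^2+by^2+cz^2=0$: reducing $\alpha,\beta$ modulo $\gamma$ only gives $a\alpha'^2+b\beta'^2\equiv 0\pmod{\gamma}$, not a factorisation that hands you integral $x,y$ to go with a smaller $z$. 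The actual construction (Step~2 and Step~3 of Algorithm~\ref{alg:ReduceSolution}) introduces \emph{auxiliary} elements $\alpha,\beta,\gamma\in\cO_K$ chosen as follows: first solve the B\'ezout equation $\beta_0\alpha-\alpha_0\beta=c$ (possible because the solution is primitive and $\cO_K$ is Euclidean), then take $\gamma$ to be the element of $\cO_K$ nearest to $-(a\alpha_0\alpha+b\beta_0\beta)/(c\gamma_0)$; the new triple is then given by the quadratic identity
\[
z=\tfrac{1}{c}\bigl(\gamma_0(a\alpha^2+b\beta^2+c\gamma^2)-2\gamma(a\alpha_0\alpha+b\beta_0\beta+c\gamma_0\gamma)\bigr),
\]
and similarly for $x,y$. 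The B\'ezout condition is exactly what forces $x,y,z\in\cO_K$, and the nearest-integer choice of $\gamma$ is where the Euclidean constant $C_d$ enters to give $|z|<|\gamma_0|$. So the ``appropriate combination forced by the equation'' that you allude to is not a reduction of the old coordinates at all, but a B\'ezout pair built from $\alpha_0,\beta_0$; your sketch would need to be rewritten around this identity to go through.
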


\begin{remark}
    We note that all the number fields considered in Theorem \ref{thm:holzer} are Euclidean. This ingredient is vital in the proof of the theorem and justifies why, to the best of our knowledge, there are no analogous results for number fields of class number different than one. We also remark that Theorem \ref{thm:holzer} does not say anything about the size of $|\alpha_0|$ and $|\beta_0|$ and is, in this sense, less general than the Hölzer condition over $\Q$.
\end{remark}

Theorem \ref{thm:holzer} guarantees that, if \eqref{eqn:main} has solutions, there are solutions which are ``minimal'' in some sense. While the proof of \cite[Theorem 6]{VargasSantos} is not algorithmic in nature, it is constructive, and we may adapt it to obtain an algorithm which allows us to obtain a solution satisfying conditions \eqref{eqn:holzer1} or \eqref{eqn:holzer2} from a particular solution $(\alpha_0, \beta_0, \gamma_0) \in \cO_K^3$ to \eqref{eqn:main}. Our proposed algorithm is the following:

\begin{algorithm} \texttt{ReduceSolution}
\label{alg:ReduceSolution}

    \noindent \textbf{Input:} The number field $K=\Q(\sqrt{-d})$ (where $d = 1, 2, 3, 7$ or $11$), the coefficients $a, b$ and $c$ of $ax^2 + by^2 + cz^2 = 0$, 
    and a particular solution to the equation, $(\alpha_0, \beta_0, \gamma_0) \in \cO_K^3$

    \noindent \textbf{Output:} A tuple $(x,y,z) \in \cO_K^3$ satisfying $ax^2 + by^2 + cz^2 = 0$ and either \eqref{eqn:holzer1} or \eqref{eqn:holzer2}, depending on the value of $d$.

    \noindent \textbf{Algorithm:}
    
    \noindent \textbf{1.-} Compute $d_0 = \gcd(\alpha_0, \beta_0, \gamma_0) \in \cO_K$. If $d_0 \neq 1$, replace the tuple $(\alpha_0, \beta_0, \gamma_0)$ by $\left({\alpha_0}/{d_0}, {\beta_0}/{d_0}, {\gamma_0}/{d_0}\right)$.

    \noindent \textbf{2.-} By using the Euclidean algorithm, compute a solution $(\alpha, \beta) \in \cO_K$ to the equation 
    \[\beta_0 X - \alpha_0 Y = c.\]
    Then, define $\gamma$ to be the closest element of $\cO_K$ to 
    \[-\frac{a\alpha_0\alpha+b\beta_0\beta}{c\gamma_0}.\]

    \noindent \textbf{3.-} Let $(x,y,z)$ be given by 
    \[x = \frac{1}{c}(\alpha_0(a\alpha^2+b\beta^2+c\gamma^2)-2\alpha(a\alpha_0\alpha+b\beta_0\beta+c\gamma_0\gamma)), \]
    \[y = \frac{1}{c}(\beta_0(a\alpha^2+b\beta^2+c\gamma^2)-2\beta(a\alpha_0\alpha+b\beta_0\beta+c\gamma_0\gamma)),\]
    and
    \[z = \frac{1}{c}(\gamma_0(a\alpha^2+b\beta^2+c\gamma^2)-2\gamma(a\alpha_0\alpha+b\beta_0\beta+c\gamma_0\gamma)).\]
    If $|z|$ satisfies the bounds \eqref{eqn:holzer1} or \eqref{eqn:holzer2}, return $(x,y,z)$. Otherwise, replace $(\alpha_0, \beta_0, \gamma_0)$ by $(x, y, z)$ and go back to Step 1.    
\end{algorithm}

The correctness of this algorithm is essentially in the proof of \cite[Theorem 6]{VargasSantos}, since the values of $(x,y,z)$ computed in Step 3 are solutions of \eqref{eqn:main} and satisfy that $|z| < |\gamma_0|$. Since, for any number field $K$ and any $B\in \mathbb{R}$, the set 
\[A_B = \{\alpha \in \cO_K \mid |\alpha| \le B\} \]
is finite, we are guaranteed to find a value of $z$ satisfying \eqref{eqn:holzer1} or \eqref{eqn:holzer2} in a finite number of steps.

\section{Concluding remarks and future lines of work}
\label{Sec:conclusions}
In this work, we have shown that many of the results for the generalised Pythagorean equation \eqref{eqn:generalisedPythagorean} hold over number fields $K$. However, our understanding of them is still not as complete as in the case $K =\Q$. 

In particular, we find that the main difficulties are both computational and arising from the minimality condition in Theorem \ref{thm:holzer}. In this sense, we think that the following are the most natural problems to consider in this area.

\textbf{Problem I}: Our implementation of Algorithm \ref{alg:Legendre} requires the factorisation of $B\cO_K$ in order to solve the congruence equation \eqref{eqn:congruencealgorithm}. For $K=\Q$, Cremona and Rusin have developed a more efficient algorithm which bypasses this factorisation (see \cite[Section 2.5, Algorithm II]{Cremona}). Is it possible to develop a similar algorithm for a general number field $K$?

\textbf{Problem II}: As remarked in Section \ref{Sec:minimal}, the minimality conditions \eqref{eqn:holzer1} and \eqref{eqn:holzer2} measure only the size of $|z|$, while the Hölzer condition over $\Q$ controls simultaneously the size of the three variables. Is it possible to do this for any of the number fields $K$ in Theorem \ref{thm:holzer}?

\textbf{Problem III}: Can Theorem \ref{thm:holzer} be proved for number fields $K$ which are not Euclidean imaginary quadratic? If so, can an algorithm similar to Algorithm \ref{alg:ReduceSolution} be developed?

\end{document}